      \newtheorem{theorem}{Theorem}
      \newtheorem{lemma}{Lemma}
      \newtheorem{proposition}{Proposition}
      \def\N{{\mathbb N}}
      \def\C{{\mathbb C}}
      \def\cFdd'{\mathcal F_{dd'}}
      \def\cK{\mathcal K}
      \def\cH{\mathcal H}
      \def\cV{\mathcal V}
      \def\cA{\mathcal A}
      \def\cD{\mathcal D}
      \def\cC{\mathcal C}
      \def\cE{\mathcal E}
      \def\cY{\mathcal Y}
      \def\cX{\mathcal X}
      \def\cK{\mathcal K}
      \def\cR{\mathcal R}
      \def\cB{\mathcal B}
      \def\cP{\mathcal P}
      \def\cQ{\mathcal Q}
      \def\cL{\mathcal L}
      \def\matnd{\mathbb M_n(\mathbb C^d)}
      \def\matd{\mathbb M(\mathbb C^d)}
      \def\sW{\mathcal W}
      \def\sA{\mathcal A}
      \def\sB{\mathcal B}
      \def\sC{\mathcal C}
      \def\sD{\mathcal D}
      \def\sAn{\sA^{(n)}}
      \def\sBn{\sB^{(n)}}
      \def\sCn{\sC^{(n)}}
      \def\sDn{\sD^{(n)}}
\begin{document}
\title{Toeplitz Corona and the Douglas Property for Free Functions}
\author[Balasubramanian]{Sriram Balasubramanian}
\address{Department of Mathematics \& Statistics\\
  Indian Institute of Science Education and Research, Kolkata, India.}
\email{bsriram80@yahoo.co.in, bsriram@iiserkol.ac.in}

\subjclass[2000]{47A13 (Primary), 47A48, 30H80 (Secondary)}

\keywords{Toeplitz Corona, non-commutative function, Douglas property.}

\maketitle
\begin{abstract}
  The well known Douglas Lemma says that for operators $A,B$ on Hilbert space that
  $AA^*-BB^*\succeq 0$ implies $B=AC$ for some contraction operator $C$.  
  The result carries over directly to classical operator-valued Toeplitz operators by simply 
  replacing operator by Toeplitz operator.  Free functions generalize the notion
  of free polynomials and formal power series and trace back to the work of J. Taylor
  in the 1970s.  They are of current interest, in part because of their connections
  with free probability and engineering systems theory. For free functions $a$ and $b$
  on a free domain $\cK$  defined free polynomial inequalities, a sufficient condition
  on the difference $aa^*-bb^*$ to imply the existence a free function $c$ taking contractive
  values on $\cK$ such that $a=bc$ is established. The connection to recent work of Agler and McCarthy
  and their free Toeplitz Corona Theorem is exposited. 
\end{abstract}
\begin{section}{Introduction}
 Free functions can be traced back to the work of Taylor \cite{T1}, \cite{T2} and generalize 
 formal power series which appear in the study of finite automata \cite{Sch}. 
 More recently they have been of interest for their connections with free probablity
  and engineering systems theory, see for instance, \cite{VDN}, \cite{V2}, \cite{V1}, \cite{BGT}, \cite{HKM2}, \cite{KVV}, \cite{AKV}, \cite{P1}, \cite{P2}, \cite{P3}, \cite{P4}, \cite{PT}, \cite{AM}, 
\cite{AM1}, \cite{AM2}, \cite{BM}.\\

 This article provides a conceptually different proof of a result in \cite{AM} 
 of a sufficient condition for the existence of a factorization $b=ac$, for free  functions 
 $a,b$ and a free contractive-valued function $c$ on a free domain determined by free polynomials. As a consquence, the Toeplitz Corona Theorem of \cite{AM} is obtained. For more on the Corona and the Toeplitz-Corona problems, see \cite{AM}, \cite{C}, \cite{CSW}, \cite{L}, \cite{Li}, \cite{Sc}, \cite{TW1}, \cite{TW2}, \cite{DS}.\\

All Hilbert spaces considered in this article are Complex and separable.  Let $\matd$ denote graded set $(\matnd)_n$, 
where $\matnd$ is the set of $d$-tuples $X=(X_1,\dots,X_d)$ of $n\times n$ matrices. Observe that the graded set $\matd$ is 
closed with respect to direct sums and unitary conjugations. More generally,\\

A {\it non-commutative set} $\cL = (\cL(n))_n$ is a graded set where $\cL(n) \subset M_n(\C^d)$ such that for $X \in \cL(m)$, $Y \in \cL(n)$ and a unitary matrix $U \in M_m(\C)$, 
\begin{itemize}
\item  [(i)] $X \oplus Y = (X_1 \oplus Y_1, \dots, X_d \oplus Y_d) \in \cK(m+n)$; and 
\item  [(ii)] $U^*XU = (U^*XU_1, \dots, U^*X_dU) \in \cK(m)$.\\
\end{itemize}

A {\it $B(\cH, \cE)$-valued non-commutative function} defined on the non-commutative set $\cL$ is a function such that for $X \in \cL(m)$, $Y \in \cL(n)$, 
\begin{itemize}
\item [(i)] $f(X) \in B(\cH \otimes \C^m, \cE \otimes \C^m)$.
\item [(ii)] $f(X \oplus Y) = f(X) \oplus f(Y)$
\item [(iii)] $f(S^{-1}XS) = (I_{\cE} \otimes S^{-1}) f(X) (I_{\cH} \otimes S)$ whenever $S \in M_m(\C)$ is invertible and $S^{-1}XS \in \cL(m)$.
\end{itemize}
We will say that such a function is {\it bounded} if $\sup_{n \in \N} E_n < \infty$, where $E_n = \sup_{X \in \cL(n)} \|f(X)\|$. Henceforth we will use the abbreviation ''nc" for "non-commutative". \\


A typical example of an nc function is a {\it free polynomial in the $d$ non-commuting variables $x_1, \dots, x_d$}, which is defined as follows. \\

Let $\mathcal F_d$ be the semigroup of words formed using the $d$-symbols $x_1, \dots, x_d$ and the empty word $\emptyset$ denote the identity element of $\mathcal F_d$. A {\it $B(\C^k)$-valued free polynomial in the non-commuting variables $x_1, \dots, x_d$} is a finite formal sum of the form $\sum_{w \in \mathcal F_d} p_w w$, where $p_w \in B(\C^k)$.  For $w = x_{j_1}x_{j_2} \dots x_{j_m}$, the evaluation of $p$ at $X \in \matnd$, is given by $p(X) = \sum_{w \in \mathcal F_d} p_w \otimes X^w \in B(\C^k \otimes \C^n)$, where $X^w = X_{j_1}X_{j_2} \dots X_{j_m}$. For $0 \in \matnd$, $p(0):= p_{\emptyset} \otimes I_n$. It is easy to see that $p$ is a $B(\C^k)$-valued nc function defined on the nc set $\matd$.\\


   Let $\epsilon$ and $\delta$ be $B(\C^k)$-valued free polynomials in $x_1, \dots, x_d$ and let $\cK$ denote the graded set $((\cK(n))_n$, where 
\begin{equation}
\label{eq:domain}
\cK(n) = \{X \in \matnd\,:\, \exists \,c > 0  \text{ such that } \epsilon(X)\epsilon(X)^* - \delta(X)\delta(X)^* \succ c (I_k \otimes I_n)\}.
\end{equation}
Observe that the graded set $\cK = (\cK(n))_n$ is an nc set.
Throughout this article, we will consider this nc set with the additional assumption that $0 \in \cK(1)$. Our main result is the following. 

\begin{proposition}
\label{prop:gtc}
Let  $\cE_1, \cE_2, \cE_3$ be Hilbert spaces and suppose that $a$ and $b$ are bounded $B(\cE_2, \cE_3)$ and $B(\cE_1, \cE_3)$ valued nc-functions on $\cK$. There exists a $B(\cE_1, \cE_2)$ valued nc-function $f$ such that, for all $n$ and $X\in \cK(n)$, 
\begin{enumerate}[(i)]
 \item  $\|f(X)\| \le 1;$ and 
 \item  $a(X)f(X) = b(X)$,
\end{enumerate}
 if there exists a $B(\ell^2\otimes\mathbb C^k, \cE_3)$-valued nc function $h$ defined on $\cK$ such that 
\begin{equation}
\label{eq:gen}
a(T)a(R)^*-b(T)b(R)^* = h(T)[I_{\ell^2}\otimes (\epsilon(T)\epsilon(R)^* - \delta(T)\delta(R)^*)] h(R)^*
\end{equation} 
for all $n \in \N$ and $R, T \in \cK(n)$.

\end{proposition}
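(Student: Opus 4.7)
The strategy is a classical lurking-isometry plus transfer-function realization argument, adapted to the nc setting. First, rearrange (\ref{eq:gen}) to put the positive terms on each side:
\begin{equation*}
 a(T)a(R)^* + h(T)[I_{\ell^2}\otimes\delta(T)\delta(R)^*]h(R)^* = b(T)b(R)^* + h(T)[I_{\ell^2}\otimes\epsilon(T)\epsilon(R)^*]h(R)^*.
\end{equation*}
Setting
\[
 M(X) := \begin{pmatrix} a(X) & h(X)(I_{\ell^2}\otimes\delta(X))\end{pmatrix},\quad N(X) := \begin{pmatrix} b(X) & h(X)(I_{\ell^2}\otimes\epsilon(X))\end{pmatrix},
\]
the rearrangement reads $M(T)M(R)^* = N(T)N(R)^*$, and this is the sole input to the argument.

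Next, declare the correspondence $M(R)^*\xi \mapsto N(R)^*\xi$ (for $R\in\cK(n)$, $\xi\in\cE_3\otimes\C^n$) to be isometric. The above kernel equality makes it inner-product preserving on the linear span of its domain. Because $\cK$ is closed under direct sums and unitary conjugations and $a,b,h,\epsilon,\delta$ respect those operations, the pointwise isometries at each level are compatible under ampliation and descend to a single contraction
\[
 U = \begin{pmatrix} \alpha & \beta\\ \gamma & \sigma\end{pmatrix}\colon \cE_2\oplus(\ell^2\otimes\C^k) \longrightarrow \cE_1\oplus(\ell^2\otimes\C^k),
\]
after extending by $0$ on the orthogonal complement of its natural domain. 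Writing $(U\otimes I_n)M(R)^*\xi = N(R)^*\xi$ blockwise and taking adjoints produces the colligation equations
\begin{align*}
 b(R) &= a(R)(\alpha^*\otimes I_n) + h(R)(I_{\ell^2}\otimes\delta(R))(\beta^*\otimes I_n),\\
 h(R)(I_{\ell^2}\otimes\epsilon(R)) &= a(R)(\gamma^*\otimes I_n) + h(R)(I_{\ell^2}\otimes\delta(R))(\sigma^*\otimes I_n).
\end{align*}
The second equation can be solved for $h(R)$, since $I_{\ell^2}\otimes\epsilon(R) - (I_{\ell^2}\otimes\delta(R))(\sigma^*\otimes I_n)$ is invertible on $\cK$: strict positivity of $\epsilon\epsilon^*-\delta\delta^*$ forces $\|\epsilon(R)^{-1}\delta(R)\|<1$, and $\|\sigma\|\le 1$. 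Substituting back into the first yields $b(R)=a(R)f(R)$ with
\[
 f(X) := (\alpha^*\otimes I_n) + (\gamma^*\otimes I_n)(I_{\ell^2}\otimes \epsilon(X)^{-1}\delta(X))\bigl[I - (\sigma^*\otimes I_n)(I_{\ell^2}\otimes \epsilon(X)^{-1}\delta(X))\bigr]^{-1}(\beta^*\otimes I_n).
\]

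The three conclusions then follow from this formula. The identity $a(X)f(X)=b(X)$ is built into the construction. The bound $\|f(X)\|\le 1$ on $\cK$ is the operator-valued Schur--Agler realization lemma applied to the contractive colligation $\bigl(\begin{smallmatrix}\sigma^* & \beta^*\\ \gamma^* & \alpha^*\end{smallmatrix}\bigr)$ (a block permutation of $U^*$, hence still a contraction) with ``Schur variable'' $I_{\ell^2}\otimes\epsilon(X)^{-1}\delta(X)$ of norm strictly less than one. That $f$ is itself an nc function is visible from its defining formula, since $\epsilon$ and $\delta$ are nc and the operations of ampliation, multiplication, and inversion on a domain of uniform invertibility all respect direct sums and unitary conjugations. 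The single delicate point is the level-compatibility asserted in the second paragraph: one must show, using the nc behaviour of the data together with the presence of the base point $0\in\cK(1)$, that the pointwise isometries at each level $n$ arise by ampliation from one fixed contraction $U$. Once this is secured, the remainder is a routine transfer-function calculation.
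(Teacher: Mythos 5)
Your proposal follows the same lurking-isometry/transfer-function strategy as the paper, but it asserts rather than proves the one step that carries the real burden of the argument. Declaring the map $M(R)^*\xi\mapsto N(R)^*\xi$ isometric at each fixed level $n$ is standard; what is not standard is showing that the resulting family of contractions can be chosen of the common form $U\otimes I_n$ for a single operator $U$ on $\cE_2\oplus(\ell^2\otimes\C^k)$ independent of $n$. You flag this yourself as ``the single delicate point,'' but beyond invoking the nc behaviour of the data you supply no argument, and that is not enough: for a fixed $n$ the extended contraction acts on $(\cE_2\oplus\ell^2\otimes\C^k)\otimes\C^n$ and has no a priori reason to commute with $I\otimes M_n(\C)$, and the operators produced at different levels $n$ have no a priori relationship with one another.

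The paper fills this gap in two substeps, and both are essential. First, at each fixed $n$ it averages the colligation unitary $V^{(n)}$ over the compact group of $n\times n$ unitaries with respect to normalized Haar measure; the averaged operator $\tilde V^{(n)}$ is still a contraction solving the colligation equation (because $\cK(n)$ and the data $a,b,h,\epsilon,\delta$ are unitary-conjugation covariant), and by construction $\tilde V^{(n)}$ is fixed by conjugation by $I\otimes V$ for every unitary $V\in M_n(\C)$; Lemma~\ref{lem:structure} then forces $\tilde V^{(n)}=\cV^{(n)}\otimes I_n$ for some contraction $\cV^{(n)}$ on $(\ell^2\otimes\C^k)\oplus\cE_2$. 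Second, to pass from a sequence $(\cV^{(n)})_n$ to a single $\cV$ valid at every level, it forms the sets $L_n$ of contractions $\cV$ for which $\cV\otimes I_n$ solves the level-$n$ equations; each $L_n$ is a nonempty WOT-closed, hence WOT-compact, subset of the unit ball, and the hypothesis $0\in\cK(1)$ together with direct-sum compatibility yields $L_{n+1}\subset L_n$, so the nested-intersection property produces a common $\cV\in\bigcap_n L_n$. Neither the Haar-averaging-plus-structure-lemma device nor the WOT-compactness argument appears in your proposal, and without some substitute the claim that the pointwise isometries ``descend to a single contraction $U$'' is unjustified. Once that operator is in hand, the rest of what you wrote --- the Douglas-lemma factorization $\delta(R)^*=\gamma(R)^*\epsilon(R)^*$ with $\gamma(R)$ a strict contraction, the transfer-function formula for $f$, the nc covariance of $f$, and the realization bound $\|f(R)\|\le 1$ --- matches the paper and is correct, so the gap is exactly and only the level-compatibility step.
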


A key ingredient in the proof is the existence of a left-invariant Haar probablity measure on the compact group of unitary matrices in $M_n(\C)$.\\

Observe that if $\epsilon = I_k \emptyset$, where $\emptyset \in \mathcal F_d$ is the empty word, then $\cK$ is the domain $G_{\delta} = (G_{\delta}(n))$  considered in \cite{AM}, where 
\begin{equation}
\label{eq:gdeltan}
G_{\delta}(n) = \{X = (X_1, \dots, X_d) \,:\, \|\delta(X)\| < 1\} \subset \matnd,
\end{equation}
 with the additional assumption that $0 \in G_{\delta}(1)$. The following theorem for the domain  $G_{\delta}$ has been proved in \cite{AM}. 

\begin{theorem}
\label{thm:gtcforam}
Let $\cE_1, \cE_2, \cE_3$ be finite-dimensional Hilbert spaces and suppose that $a$ and $b$ are bounded $B(\cE_2, \cE_3)$ and $B(\cE_1, \cE_3)$ valued nc-functions on $\cK = G_{\delta}$. The following are equivalent.
\begin{enumerate}[(i)]
\item There exists a $B(\ell^{2} \otimes \C^k, \cE_3)$ valued nc-function $h$ defined on $\cK$ such that
\[ a(T)a(R)^*-b(T)b(R)^* = h(T) [I_{\ell^2} \otimes ((I_k \otimes I_n )- \delta(T)\delta(R)^*) ]h(R)^* \]
for all $n \in \N$ and $R, T \in \cK(n)$.
\item There exists a bounded $B(\cE_1, \cE_2)$ valued nc-function $f$ such that $\|f(X)\| \le 1$ and $a(X)f(X) = b(X)$, for all $n \in \N$ and $X \in \cK(n)$.
\item $a(X)a(X)^* - b(X)b(X)^* \succeq 0$ for all $n \in \N$ and $X \in \cK(n)$.
\end{enumerate}
\end{theorem}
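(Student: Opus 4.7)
The plan is to close the cycle (i) $\Rightarrow$ (ii) $\Rightarrow$ (iii) $\Rightarrow$ (i). The first implication is immediate from Proposition \ref{prop:gtc}: choosing $\epsilon = I_k\emptyset$ makes $\cK = G_\delta$ and $\epsilon(T)\epsilon(R)^* = I_k\otimes I_n$, so the kernel identity in (i) is exactly the hypothesis (\ref{eq:gen}) of Proposition \ref{prop:gtc}, and the proposition then supplies the desired contractive nc function $f$ with $af = b$. The implication (ii) $\Rightarrow$ (iii) is a one-line computation: $a(X)a(X)^* - b(X)b(X)^* = a(X)\bigl(I - f(X)f(X)^*\bigr)a(X)^* \succeq 0$, using $\|f(X)\|\le 1$ to get $I - f(X)f(X)^*\succeq 0$.

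The substantive work lies in (iii) $\Rightarrow$ (i). I would prove it by a Hahn--Banach separation argument dual to the construction in Proposition \ref{prop:gtc}. Let $\cC$ denote the convex cone, inside the real vector space of Hermitian nc kernels on $\cK\times\cK$, generated by the admissible expressions
\[
(T,R)\mapsto h(T)\bigl[I_{\ell^2}\otimes(I_k\otimes I_n - \delta(T)\delta(R)^*)\bigr]h(R)^*
\]
as $h$ ranges over bounded $B(\ell^2\otimes\C^k,\cE_3)$-valued nc functions on $\cK$. Statement (i) asserts precisely that $aa^*-bb^* \in \cC$, so suppose for contradiction that it is not in the closure. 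Finite-dimensionality of $\cE_1,\cE_2,\cE_3$, combined with exhausting $\cK$ by bounded matrix-balls, reduces the membership test to a finite-dimensional slice on which $\cC$ is closed, and Hahn--Banach supplies a linear functional $\Lambda$ with $\Lambda(\cC)\ge 0$ but $\Lambda(aa^*-bb^*)<0$.

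The step I expect to be the main obstacle is converting $\Lambda$ into a concrete point evaluation contradicting (iii). The vehicle is a GNS-style construction: use $\Lambda$ to define a positive semidefinite sesquilinear form on the free $*$-algebra $\C\langle x_1,\dots,x_d\rangle\otimes\cE_3$, quotient by its kernel, and complete to a Hilbert space $\cH$. Left multiplication by the generators then produces a $d$-tuple $X=(X_1,\dots,X_d)$ on $\cH$; positivity of $\Lambda$ against admissible kernels with $h = p\otimes v$ (for free polynomials $p$ and vectors $v\in\ell^2\otimes\C^k$) forces $I - \delta(X)\delta(X)^*\succeq 0$, while $\Lambda(aa^*-bb^*)<0$ encodes $a(X)a(X)^* - b(X)b(X)^* \not\succeq 0$. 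The delicate point is that the defining condition \eqref{eq:domain} of $\cK(n)$ is a \emph{strict} inequality, so the $X$ produced by GNS lies only in $\overline{G_\delta}$; the remedy is to compress to a finite-dimensional corner and contract $X\mapsto rX$ for $r<1$ just under $1$, using continuity of $a,b,\delta$ to preserve both strict membership in $\cK(n)$ and the sign of $aa^*-bb^*$. Carrying out this boundary-to-interior reduction while simultaneously truncating to a finite level $n$ is the principal technical hurdle.
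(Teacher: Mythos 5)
Your arguments for (i) $\Rightarrow$ (ii) and (ii) $\Rightarrow$ (iii) coincide with the paper's: specialize Proposition \ref{prop:gtc} to $\epsilon = I_k\emptyset$, and then the one-line identity $a(X)a(X)^*-b(X)b(X)^*=a(X)(I-f(X)f(X)^*)a(X)^*$. The divergence is entirely in (iii) $\Rightarrow$ (i). The paper does not prove this implication; it cites it as Theorem 7.10 of \cite{AM}. You instead sketch the expected argument: separate $aa^*-bb^*$ from a cone $\cC$ of admissible factored kernels by Hahn--Banach, run a GNS construction on the separating functional to produce a tuple $X$ violating (iii), and then push $X$ from $\overline{G_\delta}$ back into the open domain. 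That is the right genus of argument and is broadly faithful to the Agler--McCarthy machinery, so your route is a legitimate alternative to citation and adds content the paper does not supply. That said, as written it is a plan rather than a proof, and the step you yourself flag as the ``principal technical hurdle'' does hide a concrete gap: the rescaling $X \mapsto rX$, $r\uparrow 1$, does not in general map $\overline{G_\delta}$ into $G_\delta$ when $\delta$ is a nonhomogeneous free polynomial, since $\|\delta(rX)\|$ need not be monotone in $r$; one either needs $\delta$ with no constant term, or a different interior-approximation device (e.g., replacing $X$ by $X\oplus 0$ and exploiting $0\in\cK(1)$ together with a suitable conformal map, which is roughly what \cite{AM} does). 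Similarly, the closedness of $\cC$ on the finite-dimensional slices needs an explicit argument, not just an appeal to finite-dimensionality of the $\cE_i$; this is where the bounded-matrix-ball exhaustion must be made quantitative. If you intend this to replace the citation, those two points need to be carried out; otherwise it is cleaner to do as the paper does and cite Theorem 7.10 of \cite{AM} for (iii) $\Rightarrow$ (i).
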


It is immediate that a proof of the implication $(i) \implies (ii)$ of Theorem \ref{thm:gtcforam}, follows from Proposition \ref{prop:gtc} by taking $\epsilon = I_k \emptyset$.  Thus the proof given here of Proposition \ref{prop:gtc}, exploiting the Haar measure, provides an alternate and conceptually different proof of $(i)\implies (ii)$ than the one given in \cite{AM}. \\


The article is organized as follows. Section 2 contains some preliminary lemmas that will be used in the sequel. Section 3 contains the proofs of Proposition \ref{prop:gtc} (the main result of this article) and Theorem \ref{thm:gtcforam}.
The article ends with the Toeplitz-Corona theorem of \cite{AM} for the nc domain $\cK = G_{\delta}$ with $0 \in \cK(1)$.
\end{section}

\begin{section}{Preliminaries}
\begin{lemma}
\label{lem:structure}
Let $\cX,\cY$ be separable Hilbert spaces and $W \in B(\cX \otimes \C^n, \cY \otimes \C^n)$. If $W = (I_{\cY} \otimes V) W (I_{\cX} \otimes V^*)$ for all unitaries $V \in M_n(\C)$, 
then there exists an operator $\sW\in B(\cX,\cY)$ such that $W = \sW \otimes I_n.$ 
\end{lemma}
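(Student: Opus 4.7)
The plan is to combine the standard block decomposition of $W$ with a Schur-style averaging over the unitary group $U(n) \subset M_n(\C)$; this is precisely the Haar-measure ingredient flagged right after Proposition~\ref{prop:gtc}.

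First, using the natural identification $B(\cX \otimes \C^n, \cY \otimes \C^n) \cong B(\cX, \cY) \otimes M_n(\C)$, I would write $W = \sum_{i,j=1}^{n} W_{ij} \otimes E_{ij}$ for the unique operators $W_{ij} \in B(\cX, \cY)$, where $\{E_{ij}\}$ are the standard matrix units. Since $(I_\cY \otimes V)(W_{ij} \otimes E_{ij})(I_\cX \otimes V^*) = W_{ij} \otimes V E_{ij} V^*$, the hypothesis rewrites as
\[
\sum_{i,j=1}^{n} W_{ij} \otimes V E_{ij} V^* \;=\; \sum_{i,j=1}^{n} W_{ij} \otimes E_{ij}
\]
for every unitary $V \in M_n(\C)$.

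The second step is to separate the two tensor legs by testing this identity against an arbitrary bounded linear functional $\phi \in B(\cX, \cY)^*$ applied in the first slot. This produces the matrix $T_\phi := \sum_{i,j} \phi(W_{ij})\, E_{ij} \in M_n(\C)$, which then satisfies $V T_\phi V^* = T_\phi$ for every unitary $V$. Integrating against normalized Haar measure and invoking its left- and right-invariance (equivalently, Schur's lemma for the defining representation of $U(n)$), one concludes $T_\phi = c_\phi I_n$ for some scalar $c_\phi$. Hence $\phi(W_{ij}) = 0$ whenever $i \ne j$, and $\phi(W_{ii})$ is independent of $i$. Because $B(\cX, \cY)^*$ separates points, it follows that $W_{ij} = 0$ for $i \ne j$ and $W_{11} = \cdots = W_{nn}$. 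Setting $\sW := W_{11}$ then yields $W = \sW \otimes I_n$, as required.

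The only step that is not routine algebraic bookkeeping is the assertion that $V T_\phi V^* = T_\phi$ for every unitary $V$ forces $T_\phi$ to be scalar, but this is the well-known consequence of Haar averaging on $U(n)$ (equivalently, the unitaries span $M_n(\C)$, so $T_\phi$ must lie in the center $\C\, I_n$). Consequently no real obstacle is anticipated; the lemma will follow essentially by Haar-invariance together with point-separation by the dual of $B(\cX, \cY)$.
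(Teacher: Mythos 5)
Your proof is correct, and it rests on the same underlying fact as the paper's — that the only matrices commuting with every unitary in $M_n(\C)$ are scalar multiples of the identity — but you reach it by a slightly more abstract route. The paper's proof first observes that unitaries span $M_n(\C)$, so the hypothesis upgrades to the intertwining relation $(I_\cY \otimes X)W = W(I_\cX \otimes X)$ for every $X \in M_n(\C)$; then, in the block decomposition $W = \sum W_{j,k} \otimes E_{j,k}$, it simply plugs in the rank-one matrices $X = e_\alpha e_\beta^*$ and reads off directly that $W_{j,k} = 0$ for $j \ne k$ and $W_{\alpha,\alpha} = W_{\beta,\beta}$. You instead apply a functional $\phi \in B(\cX,\cY)^*$ in the first tensor leg to reduce to the scalar matrix $T_\phi = \sum \phi(W_{ij}) E_{ij}$, invoke Schur's lemma (or Haar averaging, or the spanning of unitaries) to conclude $T_\phi \in \C I_n$, and then recover $W_{ij} = 0$ ($i \ne j$) and $W_{11} = \cdots = W_{nn}$ via Hahn–Banach separation. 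Both arguments are sound. Your functional-plus-separation step is a clean conceptual reduction to the finite-dimensional case, but it adds machinery where the paper's direct block comparison gets there in one move; also, the Haar integral you invoke is not really needed for this lemma — once $T_\phi$ commutes with all unitaries, it lies in the commutant of $M_n(\C)$ itself and is scalar with no averaging required (the paper reserves the Haar-measure ingredient for the proof of Proposition~\ref{prop:gtc}, not for this lemma).
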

 
\begin{proof}
  The result is an embodiment of the fact that the only $n\times n$ matrices which commute with all $n\times n$ matrices are multiples of the identity. 
  Since $(I_{\cY} \otimes V) W = W(I_{\cX} \otimes V)$ for every unitary $V \in M_n(\C)$, it follows that 
\begin{equation}
 \label{eq:intertwine}
(I_{\cY} \otimes X)W=W(I_{\cX}\otimes X)
\end{equation}
  for every $X \in M_n(\C)$.  Let $\{e_1,\dots,e_n\}$ denote an orthonormal basis for $\C^n$ and
  let $E_{j,k} = e_j e_k^*$ denote the resulting matrix units. Write 
  $W=\sum W_{j,k} \otimes E_{j,k}$ for operators $W_{j,k}:\cX\to \cY$.  Choosing, for $1\le\alpha,\beta\le n$, 
  the matrix $X= e_\alpha e_\beta^*$,  from equation \eqref{eq:intertwine} it follows that
\[
   \sum_k W_{\beta,k} \otimes e_\alpha e_k^*  = \sum_j W_{j,\alpha} \otimes e_j e_\beta^*.
\]
 Hence, 
  $W_{\beta,k}=0$ for $k\ne \beta$,  $W_{j, \alpha}=0$ for $j\ne \alpha$ and  $W_{\alpha,\alpha}=W_{\beta,\beta}$ and the result follows by taking $\mathcal W = W_{\alpha, \alpha}$.
\end{proof}

\begin{lemma}
\label{lem:strcont}
Let $\cH$ be a Hilbert space and suppose $A,B \in B(\cH).$  If $AA^* - BB^* \succ c I$ for some $c > 0,$ then  there exists
 a unique $E \in B(\cH)$ such that $B^* = E^*A^*$ and $\|E^*\| \le 1$. Moreover, if $\cH$ is finite dimensional, then
 $E$ is unique and  $\|E^*\| < 1$. 
\end{lemma}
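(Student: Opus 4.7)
The plan is essentially to invoke the Douglas factorization lemma, but with the stronger hypothesis $AA^*-BB^*\succ cI$ yielding extra structural information (that $A^*$ is bounded below), which handles both the uniqueness in finite dimensions and the strict contractivity.

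First I would show that $A^*$ is bounded below. For any $x\in\cH$,
\[
\|A^*x\|^2 = \langle AA^*x,x\rangle \ge \langle BB^*x,x\rangle + c\|x\|^2 \ge c\|x\|^2,
\]
so $A^*$ is injective with closed range. In particular $\operatorname{Range}(A^*)$ is a closed subspace of $\cH$, and $\ker A^* = \{0\}$, i.e.\ $A$ has dense (in fact full) range.

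Next I would construct $E^*$. Since $A^*$ is injective, the prescription
\[
E^*(A^*x) := B^*x
\]
unambiguously defines a linear map on $\operatorname{Range}(A^*)$. The hypothesis $AA^*-BB^*\succeq 0$ gives $\|B^*x\|\le\|A^*x\|$ for every $x$, so this map is a contraction on the closed subspace $\operatorname{Range}(A^*)$. I would then extend $E^*$ by declaring it to be zero on $\operatorname{Range}(A^*)^\perp = \ker A$, producing a bounded operator $E^*\in B(\cH)$ with $\|E^*\|\le 1$ and $E^*A^*=B^*$. By construction $E^*$ is the unique contractive factor whose range lies in $\overline{\operatorname{Range}(A^*)}$.

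For the finite-dimensional addendum, $A$ being surjective on a finite-dimensional space forces $A$ to be invertible, hence $\operatorname{Range}(A^*)=\cH$ and $E^* = B^*(A^*)^{-1}$ is the only solution to $E^*A^*=B^*$, settling uniqueness. For the strict bound, I would combine the strict inequality with the injectivity estimate already established: writing $y=A^*x$ and using $\|y\|=\|A^*x\|\le\|A^*\|\,\|x\|$,
\[
\|E^*y\|^2 = \|B^*x\|^2 \le \|A^*x\|^2 - c\|x\|^2 \le \|y\|^2 - \frac{c}{\|A^*\|^2}\|y\|^2,
\]
which gives $\|E^*\|^2\le 1-c/\|A^*\|^2<1$.

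The main (very minor) obstacle is simply bookkeeping: making sure the ad hoc definition on $\operatorname{Range}(A^*)$ really extends to a well-defined bounded operator on all of $\cH$, which rests on the closedness of $\operatorname{Range}(A^*)$ established in the first step. The strict inequality in the finite-dimensional case is essentially free once one realises that the strictness constant $c$ transfers from the $x$-norm to the $y$-norm through the operator norm $\|A^*\|$.
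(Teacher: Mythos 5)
Your proof is correct, but you take a more self-contained route than the paper. The paper's proof is two lines: it invokes Douglas' lemma \cite{D} as a black box to produce the contraction $E$ with $B=AE$, and then, in finite dimensions, notes that $A$ is invertible (since $AA^*\succeq cI$), so $E=A^{-1}B$ is unique, and uses the identity $A(I-EE^*)A^*=AA^*-BB^*\succeq cI$ together with the invertibility of $A$ to conclude $I-EE^*\succ 0$, i.e.\ $E$ is a strict contraction. You instead essentially re-derive Douglas' lemma in this setting: you show $A^*$ is bounded below with closed range, define $E^*$ explicitly on $\operatorname{Range}(A^*)$ by $E^*(A^*x)=B^*x$, extend by zero on $\ker A$, and then obtain the strict norm bound $\|E^*\|^2\le 1-c/\|A^*\|^2$ by a direct estimate. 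Both arguments are sound; the paper's is shorter by delegating to Douglas and doing the strictness via an algebraic identity, while yours is more elementary and makes explicit exactly where the constant $c$ enters the quantitative bound on $\|E^*\|$ (the two routes give the same constant, since $\lambda_{\min}((A^*A)^{-1})=1/\|A^*\|^2$). One small mismatch with the statement as printed: the lemma's first sentence asserts a \emph{unique} $E$, which is not generally true in infinite dimensions (one can perturb $E^*$ on $\ker A$); both your proof and the paper's silently treat uniqueness as a finite-dimensional claim only, which is the correct reading, and your added remark that $E^*$ is unique among contractive factors with range in $\overline{\operatorname{Range}(A^*)}$ is a nice clarification the paper omits.
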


\begin{proof}
 The Douglas lemma (\cite{D}) implies the existence of a contraction $E$ such that $B=AE$ assuming only that
  $AA^*-BB^*\succeq 0$.  Since the hypotheses imply that $AA^*\succeq cI$ is invertible, in the case that $\cH$ is finite dimensional, 
  it follows that $A$ is invertible and $E=A^{-1}B$ is uniquely determined.  Moreover, since $A(I-EE^*)A^* \succeq c I$ and
 $A$ is invertible, $E$ is a strict contraction. 
\end{proof}

\end{section}

\begin{section}{The Proofs}

Let $G^{(n)} = \{U \in M_n(\C) \, :  \, U^*U = I\}.$ It is well known that $G^{(n)}$ is a compact group with respect to multiplication. Hence there exists a unique left-invariant Haar measure $h^{(n)}$ on $G^{(n)}$ such that $h^{(n)}(G) = 1$ and 
\begin{equation}
\label{eq:haar}
\displaystyle \int_{G^{(n)}} f(U) dh^{(n)}(U) = \displaystyle \int_{G^{(n)}} f(VU) dh^{(n)}(U),
\end{equation}
for all $f \in C(G^{(n)})$, $U, V \in G^{(n)}$. For more details see \cite{C}.\\

Recall the nc set $\cK$ defined in \eqref{eq:domain} and the assumption that $0 \in \cK(1)$. 

%
%

\begin{proof}[Proof of Proposition \ref{prop:gtc}]
Fix $n \in \N$. For all $R, T \in \cK(n)$, rearranging \eqref{eq:gen} yields, 
\begin{equation}
\label{eq:genrearr}
\begin{split}
a(T)a(R)^* +  & h(T) [I_{\ell^2} \otimes \delta(T) \delta(R)^*] h(R)^*\\
  = & h(T) [I_{\ell^2} \otimes \epsilon(T) \epsilon(R)^*] h(R)^* + b(T)b(R)^*.
\end{split}
\end{equation}
Consider the closed subspaces: 
 \[
\mathcal D^{(n)} =\overline{span} \left \{ \begin{bmatrix} (I_{\ell^2} \otimes \delta(R)^*) h(R)^* \\ a(R)^* \end{bmatrix} x \,:\, x \in \cE_3 \otimes \C^n, R \in \cK(n)\right \},
\]
\[\mathcal R^{(n)} = \overline{span} \left \{ \begin{bmatrix} (I_{\ell^2} \otimes \epsilon(R)^*) h(R)^* \\ b(R)^* \end{bmatrix} x \,:\, x \in \cE_3 \otimes \C^n, R \in \cK(n)\right \}
\]
of $(\ell^2 \otimes \C^k \otimes \C^n )\oplus (\cE_2 \otimes \C^n)$ and $(\ell^2 \otimes \C^k \otimes \C^n ) \oplus (\cE_1 \otimes \C^n)$ respectively. \\\\
Let $W^{(n)}: \mathcal D^{(n)} \rightarrow \mathcal R^{(n)}$ be the linear map obtained by extending the map
\[
\begin{bmatrix}(I_{\ell^2}\otimes \delta(R)^*) h(R)^* \\ a(R)^* \end{bmatrix} x \rightarrow \begin{bmatrix} (I_{\ell^2}\otimes \epsilon(R)^*) h(R)^* \\ b(R)^* \end{bmatrix} x
\] linearly to all of $\mathcal D^{(n)}$. It follows from equation  \eqref{eq:genrearr} that $W_n:\mathcal D^{(n)} \rightarrow \mathcal R^{(n)}$ is an isometry
 (and hence the map is indeed well defined). Since the codimensions of $\mathcal D^{(n)}$ and $\mathcal R^{(n)}$ agree, it follows that $W^{(n)}: \cD^{(n)} \rightarrow \cR^{(n)}$ can be extended to a unitary $V^{(n)}$.  Thus 
\[
V^{(n)}:= \begin{pmatrix} A^{(n)} & B^{(n)} \\ C^{(n)} & D^{(n)} \end{pmatrix} \,:\,  (\ell^2 \otimes \C^k \otimes \C^n ) \oplus (\cE_2 \otimes \C^n) \rightarrow  (\ell^2 \otimes \C^k \otimes \C^n) \oplus (\cE_1 \otimes \C^n)
\]
 and satisfies 
\[
\begin{pmatrix} A^{(n)} & B^{(n)} \\ C^{(n)} & D^{(n)} \end{pmatrix} 
  \begin{pmatrix} (I_{\ell^2}\otimes \delta(R)^*) h(R)^* \\ a(R)^* \end{pmatrix} = \begin{pmatrix} (I_{\ell^2}\otimes \epsilon(R)^*) h(R)^* \\ b(R)^* \end{pmatrix}
\]
i.e.
\begin{equation}
\label{eq:first}
\sum_{\ell = 1}^k A^{(n)} (I_{\ell^2} \otimes \delta(R)^*)h(R)^* + B^{(n)}a(R)^* =  (I_{\ell^2} \otimes \epsilon(R)^*)h(R)^*,
\end{equation}
\begin{equation}
\label{eq:second}
C^{(n)}(I_{\ell^2} \otimes \delta(R)^*)h(R)^*  + D^{(n)} a(R)^* = b(R)^*.
\end{equation}
Let $U \in G^{(n)}$. Observe that $U^*RU \in \cK(n)$. Replacing $R$ in equations \eqref{eq:first} and \eqref{eq:second} 
  by $U^*R U$ yields,
\begin{dmath}
\label{eq:third}
 A^{(n)} (I_{\ell^2} \otimes I_k \otimes U^*) (I_{\ell^2} \otimes \delta(R)^*)h(R)^* (I_{\cE_3} \otimes U) 
 + B^{(n)} (I_{\cE_2} \otimes U^*) a(R)^* (I_{\cE_3} \otimes U) 
=  (I_{\ell^2} \otimes I_k \otimes U^*) (I_{\ell^2} \otimes \epsilon(R)^*)h(R)^*  (I_{\cE_3} \otimes U),
\end{dmath}
and
\begin{dmath}
\label{eq:fourth}
C^{(n)}(I_{\ell^2} \otimes I_k \otimes U^*) (I_{\ell^2} \otimes \delta(R)^*)h(R)^* (I_{\cE_3} \otimes U) 
+ D^{(n)} (I_{\cE_2} \otimes U^*) a(R)^* (I_{\cE_3} \otimes U)
  =  (I_{\cE_1} \otimes U^*)b(R)^*(I_{\cE_3} \otimes U).
\end{dmath}
Multiplying equation \eqref{eq:third} on the left by $(I_{\ell^2} \otimes I_k \otimes U)$ and on the right by 
$(I_{\cE_3} \otimes U^*)$ and equation \eqref{eq:fourth} on the left by $(I_{\cE_1} \otimes U)$ and on the left by $(I_{\cE_3} \otimes U^*)$ yields,

\begin{dmath}
\label{eq:fifth}
 (I_{\ell^2} \otimes I_k \otimes U) A^{(n)} (I_{\ell^2} \otimes I_k \otimes U^*) (I_{\ell^2} \otimes \delta(R)^*)h(R)^* 
 + (I_{\ell^2} \times I_k \otimes U) B^{(n)} (I_{\cE_2} \otimes U^*) a(R)^*  
=  (I_{\ell^2} \otimes \epsilon(R)^*)h(R)^* ,
\end{dmath}
and

\begin{dmath}
\label{eq:sixth}
(I_{\cE_1} \otimes U) C^{(n)} (I_{\ell^2} \otimes I_k \otimes U^*)  (I_{\ell^2} \otimes \delta(R)^*)h(R)^* 
+ (I_{\cE_1} \otimes U) D^{(n)} (I_{\cE_2} \otimes U^*) a(R)^*  = b(R)^*.
\end{dmath}

Let $\tilde{A}^{(n)}$, $\tilde{B}^{(n)}$, $\tilde{C}^{(n)}$ and $\tilde{D}^{(n)}$ denote the bounded (in fact, contractive) operators that satisfy 
\begin{equation}
\label{eq:ops}
\begin{split}
&\langle \tilde{A}^{(n)} x, y \rangle = \displaystyle \int_{G^(n)} \langle {A}^{(n)} (I_{\ell^2} \otimes I_k \otimes U^*) x,  (I_{\ell^2}  \otimes I_k \otimes U^*) y \rangle \,dh^{(n)}(U) \\
& \langle \tilde{B}^{(n)} a, b \rangle = \displaystyle \int_{G^(n)} \langle {B}^{(n)} (I_{\cE_2} \otimes U^*) a,  (I_{\ell^2} \otimes I_k \otimes U^*)b \rangle \,dh^{(n)}(U) \\
& \langle \tilde{C}^{(n)} z, w \rangle = \displaystyle \int_{G^(n)} \langle {C}^{(n)} (I_{\ell^2} \otimes I_k \otimes U^*) z,   (I_{\cE_1} \otimes U^*)w \rangle \,dh^{(n)}(U) \\
& \langle \tilde{D}^{(n)} g, h \rangle = \displaystyle \int_{G^(n)} \langle {D}^{(n)} (I_{\cE_2} \otimes U^*)g ,   (I_{\cE_1} \otimes U^*) h\rangle \,dh^{(n)}(U) \\
\end{split}
\end{equation}
for all $x,y,b,z \in \ell^2 \otimes \C^k \otimes \C^n$; $a,g \in \cE_2 \otimes \C^n$; $w, h \in \cE_1 \otimes \C^n$. 
Moreover,
For $x \in \cE_3 \otimes \C^n$ and $y \in \ell^2 \otimes \C^k \otimes \C^n$, $u \in \cE_3 \otimes \C^n$ and $v \in \cE_1 \otimes \C^n$, it follows from equations \eqref{eq:ops}, \eqref{eq:fifth} and \eqref{eq:sixth} that 
\begin{dmath}
\label{eq:seventh}
 \left \langle   [\tilde{A}^{(n)}  (I_{\ell^2} \otimes \delta(R)^*)h(R)^* + \tilde{B}^{(n)} a(R)^* ]x,y \right \rangle  
= \int_{G^{(n)}} \left \langle [ (I_{\ell^2} \otimes I_k \otimes U) A^{(n)}(I_{\ell^2} \otimes I_k \otimes U^*) (I_{\ell^2} \otimes \delta(R)^*)h(R)^* 
+ (I_{\ell^2} \otimes I_k \otimes U) B^{(n)} (I_{\cE_2} \otimes U^*) a(R)^* ] x,y \right \rangle dh^{(n)}(U)  = 
\int_{G^{(n)}} \left \langle  (I_{\ell^2} \otimes \epsilon(R)^*)h(R)^* x, y \right\rangle dh^{(n)}(U) =  \left \langle  (I_{\ell^2} \otimes \epsilon(R)^*)h(R)^* x, y \right\rangle
\end{dmath}
 
as well as

\begin{dmath}
\label{eq:eighth}
\left \langle [ \tilde{C}^{(n)} (I_{\ell^2} \otimes \delta(R)^*)h(R)^* + \tilde{D}^{(n)} a(R)^* ] u,v \right \rangle = 
\int_{G^{(n)}} \left \langle [ (I_{\cE_1} \otimes U) C^{(n)} (I_{\ell^2} \otimes I_k \otimes U^*) (I_{\ell^2} \otimes \delta(R)^*)h(R)^* + (I_{\cE_1} \otimes U) D^{(n)} (I_{\cE_2} \otimes U^*) a(R)^* ] u, v \right \rangle \, dh^{(n)}(U) = 
\int_{G^{(n)}} \langle   b(R)^*u, v \rangle\, dh^{(n)}(U). = \langle   b(R)^*u, v \rangle 
\end{dmath} 
Equations \eqref{eq:seventh} and \eqref{eq:eighth} together imply that 
\[
\begin{pmatrix} \tilde{A}^{(n)} & \tilde{B}^{(n)} \\ \tilde{C}^{(n)} & \tilde{D}^{(n)} \end{pmatrix} \begin{pmatrix} (I_{\ell^2} \otimes \delta(R)^*) h(R)^* \\ a(R)^* \end{pmatrix} = \begin{pmatrix} (I_{\ell^2} \otimes \epsilon(R)^* )h(R)^* \\ b(R)^* \end{pmatrix}.
\]
Also, observe that $\begin{pmatrix} \tilde{A}^{(n)} & \tilde{B}^{(n)} \\ \tilde{C}^{(n)} & \tilde{D}^{(n)} \end{pmatrix}$ is a contraction.
Lastly, for $V \in G^{(n)}$, the left invariance property of the Haar measure $h$ implies that $\tilde{A}^{(n)}$, $\tilde{B}^{(n)}$, $\tilde{C}^{(n)}$ and $\tilde{D}^{(n)}$ are invariant under conjugation by $I \otimes V$ and hence 
\begin{align*}
&\tilde{A}^{(n)} = (I_{\ell^2} \otimes I_k \otimes V) \tilde{A}^{(n)} (I_{\ell^2} \otimes I_k \otimes V^*) \\
& \tilde{B}^{(n)} = (I_{\ell^2} \otimes I_k \otimes V)  \tilde{B}^{(n)} (I_{\cE_2} \otimes V^*) \\
& \tilde{C}^{(n)} = (I_{\cE_1} \otimes V) \tilde{C}^{(n)}(I_{\ell^2} \otimes I_k \otimes V^*) \\
& \tilde{D}^{(n)} = (I_{\cE_1} \otimes V) \tilde{D}^{(n)} (I_{\cE_2} \otimes V^*). 
\end{align*}


It follows from Lemma \ref{lem:structure} that 
 there exists bounded operators $\sAn$, $\sBn,$ $\sCn,$ and $\sDn$ such that 
$\tilde{A}^{(n)} = \sAn \otimes I_n$, $\tilde{B}^{(n)} = \sBn \otimes I_n$, 
  $\tilde{C}^{(n)}  = \sCn \otimes I_n$ and $\tilde{D}^{(n)} = \sDn \otimes I_n$,
 where  $\sAn \in B (\ell^2 \otimes \C^k)$, $\sBn \in B(\cE_2, \ell^2 \otimes \C^k)$, $\sCn \in B(\ell^2 \otimes \C^k, \cE_1)$ and $\sDn \in B(\cE_2,\cE_1)$. Moreover, 
\[
   \begin{pmatrix} \sAn & \sBn \\ \sCn & \sDn \end{pmatrix}: (\ell^{2} \otimes \C^k) \oplus \cE_2 \rightarrow (\ell^{2} \otimes \C^k)\oplus \cE_1
\] is a contraction.

Let $\cH = (\ell^{2} \otimes \C^k) \oplus \cE_2$ and $\cE = (\ell^{2} \otimes \C^k) \oplus \cE_1$. Observe that $\cH \oplus \cE$ is separable.
At this point, it has been proved that there exists an operator $\cV \in B(\cH,\cE)$ such that $\|\cV\|\le 1$ and 
\begin{equation}
 \label{eq:THEeq}
   \cV\otimes I_n \, 
  \begin{pmatrix} (I\otimes \delta(R)^*) h(R)^* \\ a(R)^* \end{pmatrix} = \begin{pmatrix} (I\otimes \epsilon(R)^*) h(R)^* \\ b(R)^* \end{pmatrix}.
\end{equation}
 Let 
\[
L_n = \left\{  \begin{pmatrix}  0 & 0 \\ \cV & 0 \end{pmatrix} \,:\, 
\|\cV\| \le 1 \text{ and } (\cV \otimes I_n) \text{ solves \eqref{eq:THEeq}}  \right\} \subset  B(\cH \oplus \cE).
\]
The argument above implies that $L_n \neq \emptyset$ for each $n \in \N$. 
 It is also the case that $L_n$ is a WOT-closed subset of the WOT-compact unit ball of $B(\cH \oplus \cE)$. Thus $L_n$ is WOT-compact for each $n \in \N$.
 Moreover since $0 \in \cK(1)$, it follows that $L_n \supset L_{n+1}$. 
By the nested intersection property of compact sets,
  $\displaystyle \bigcap_{n \in \N} L_n$ is non-empty. Say $\begin{pmatrix} 0 & 0 \\ V & 0 \end{pmatrix} \in \displaystyle \bigcap_{n \in \N} L_n$, where $V = \begin{pmatrix} A & B \\ C & D \end{pmatrix}$ with $A \in B(\ell^{2} \otimes \C^k)$, $B \in B(\cE_2, \ell^{2} \otimes \C^k)$, $C\in B(\ell^{2} \otimes \C^k, \cE_1)$ and $D \in B(\cE_2, \cE_1)$.\\ 


For all $n \in \N$ and $R \in \cK(n)$, we have,
\begin{align}
\label{eq:universalsolution1}
&(A \otimes I_n) (I_{\ell^2} \otimes\delta(R)^*) h(R)^* + (B \otimes I_n)a(R)^* = (I_{\ell^2} \otimes\epsilon(R)^*)h(R)^* \\
\label{eq:universalsolution2}
&(C \otimes I_n) (I_{\ell^2} \otimes \delta(R)^*)h(R)^* + (D \otimes I_n) a(R)^* = b(R)^*.
\end{align}

By Lemma \ref{lem:strcont}, for each $n \in \N$ and $R\in\cK(n)$ there exists a uniquely determined strict contraction $\gamma(R) \in B(\C^k \otimes \C^n)$ such that
\begin{equation}
\label{eq:z}
\delta(R)^* = \gamma(R)^* \epsilon(R)^*.
\end{equation}


Since $\|A \otimes I_n\| \le 1$ and $\| \gamma(R)^*\| <1$, rearranging equation \eqref{eq:universalsolution1} and using \eqref{eq:z} yields,
\begin{equation}
\label{eq:imp1}
(I_{\ell^2} \otimes \epsilon(R)^*)h(R)^* = \{I_{\ell^{2}} \otimes I_k \otimes I_n - (A \otimes I_n) (I_{\ell^2} \otimes \gamma(R)^*)\}^{-1}(B \otimes I_n)a(R)^*.
\end{equation}
Using \eqref{eq:imp1} and \eqref{eq:z} in \eqref{eq:universalsolution2} yields,
\begin{dmath}
[(C \otimes I_n) (I_{\ell^2} \otimes \gamma(R)^*) \{I_{\ell^{2}} \otimes I_k \otimes I_n - (A \otimes I_n) (I_{\ell^2} \otimes \gamma(R)^*)\}^{-1}(B \otimes I_n) + (D \otimes I_n)]a(R)^* = b(R)^*.
\end{dmath}
For $n \in \N$, $R \in \cK(n)$, define the function $f$ on $\cK$ by 
\begin{dmath}
f(R) = [(C \otimes I_n) (I_{\ell^2} \otimes \gamma(R)^*) \{I_{\ell^{2}} \otimes I_k \otimes I_n - (A \otimes I_n) (I_{\ell^2} \otimes \gamma(R)^*)\}^{-1}(B \otimes I_n) + (D \otimes I_n)]^*
\end{dmath}
Thus $f$ is a $B( \cE_1,\cE_2)$-valued graded function which satisfies $a(R)f(R) = b(R)$. It is also easy to see that $f$ preserves direct sums.\\

Finally, to show that $f$ is an nc function, suppose $R\in\cK(n)$ and $S$ is an invertible $n\times n$ matrix such that $S^{-1}RS\in\cK(n)$. 
 We need to show that $f(S^{-1}RS) =  (I_{\cE_2} \otimes S^{-1})f(R)(I_{\cE_1} \otimes S)$.  Observe that $\gamma(R)^*$ is uniquely determined by \eqref{eq:z}, since $\epsilon(R)^*$ is invertible.
   From the form of $f$, it is enough to show $\gamma(S^{-1}RS) =  (I_k \otimes S^{-1})\gamma(R)(I_k \otimes S).$  To this end, observe that,
\begin{dmath}
 (I_k \otimes S^*) \delta(R)^* (I_k \otimes {(S^{*})}^{-1}) = \delta(S^{-1} R S)^* = \gamma (S^{-1} R S)^* \epsilon( S^{-1} R S)^* = \gamma(S^{-1} R S)^* (I_k \otimes S^*) \epsilon(R)^* (I_k \otimes {(S^{*})}^{-1}).
\end{dmath}
Thus
\[
 (I_k \otimes S^*) \gamma(R)^* \epsilon(R)^* (I_k \otimes {(S^{*})}^{-1}) = \gamma(S^{-1} R S)^* (I_k \otimes S^*) \epsilon(R)^* (I_k \otimes {(S^{*})}^{-1}).
\]
Since $\epsilon(R)^* (I_k \otimes {(S^{*})}^{-1})$ is invertible, taking adjoints, it follows that 
\[
(I_k \otimes S^{-1}) \gamma(R) (I_k \otimes S) = \gamma(S^{-1}RS).
\]
The proof is complete if we show that $\|f(R)\| \le 1$ for every $n \in \N$ and $R \in \cK(n)$.
Recall that for all $n \in \N$, $V \otimes I_n = \begin{pmatrix}  \cA  & \cB \\ \cC &  \cD \end{pmatrix} =
\begin{pmatrix}  A \otimes I_n & B \otimes I_n \\ C \otimes I_n & D \otimes I_n \end{pmatrix} $ is a contraction. Thus there exists bounded operators $\cP$ and $\cQ$ such that 
\begin{displaymath}
\label{eq:matrix}
\begin{pmatrix}  \cP^* \cP & \cP^* \cQ \\ \cQ^* \cP & \cQ^* \cQ \end{pmatrix} = 
\begin{pmatrix} I_{\ell^2 \otimes \C^k \otimes \C^n} & 0 \\ 0 & I_{{\cE_2} \otimes \C^n}  \end{pmatrix} - 
\begin{pmatrix}  \cA^*  & \cC^* \\\cB^* &  \cD^* \end{pmatrix} 
\begin{pmatrix}  \cA  & \cB \\ \cC &  \cD \end{pmatrix} \succeq 0.
\end{displaymath}

For notational convenience, let $\Gamma(R) := (I_{\ell^2} \otimes \gamma(R)^*)$, $\Delta(R) := (I_{\ell^2} \otimes I_k \otimes I_n - \cA \Gamma(R))$ and $\Phi(R) := \Delta(R)^{-1}$. We have $f(R)^* = \cD + \cC \Gamma(R) \Phi(R) \cB$. 
Using equation \eqref{eq:matrix}, for $n \in \N$ and $X \in \cK(n)$, we have 

\begin{align*}
(I_{\cE_2} \otimes I_n) - f(R)f(R)^* &= (I_{\cE_2} \otimes I_n) - \cD^* \cD - \cB^* \Phi(R)^* \Gamma(R)^* \cC^* \cD \\
& - 
\cD^* \cC \Gamma(R) \Phi(R) \cB  - \cB^* \Phi(R)^* \Gamma(R)^* \cC^* \cC \Gamma(R) \Phi(R) \cB \\\\
&= \cQ^* \cQ + B^*B + \cB^* \Phi(R)^* \Gamma(R)^*(\cA^* \cB + \cP^* \cQ) \\
&+ (\cB^* \cA + \cQ^* \cP) \Gamma(R) \Phi(R) \cB \\
&- \cB^* \Phi(R)^* \Gamma(R)^* (I - \cA^* \cA - \cP^* \cP) \Gamma(R) \Phi(R) \cB \\\\
&= \cB^* \Phi(R)^* [ \Delta(R)^* \Delta(R)  + \Gamma(R)^* \cA^* \Delta(R) + \Delta(R)^* \cA \Gamma(R) \\
&- \Gamma(R)^*(I - \cA^* \cA) \Gamma(R)] \Phi(R) \cB \\
&+ \cQ^* \cQ + \cB^* \Phi(R)^*\Gamma(R)^* \cP^* \cQ + \cQ^* \cP \Gamma(R) \Phi(R) \cB \\
&+ \cB^* \Phi(R)^* \Gamma(R)^* \cP^* \cP \Gamma(R) \Phi(R) \cB \\\\
&= \cB^* \Phi(R)^* [I - \Gamma(R)^* \Gamma(R)] \Phi(R) \cB \\
&+ (\cQ + \cP \Gamma(R) \Phi(R) \cB)^*(\cQ + \cP \Gamma(R) \Phi(R) \cB)\\\\
&\succeq 0.
\end{align*}
\end{proof}



\begin{proof}[Proof of Theorem \ref{thm:gtcforam}]
(i) implies (ii): Follows from Proposition \ref{prop:gtc}, by letting $\epsilon = I_k \emptyset$.\\\\
(ii) implies (iii): Observe that for each $n \in \N$ and $X \in \cK(n)$, 
\begin{dmath}
a(X)a(X)^* - b(X)b(X)^* = 
a(X)a(X)^* - a(X)f(X)f(X)^*a(X)^* = a(X)(I_{\cE_2} \otimes I_n - f(X)f(X)^*)a(X)^* \succeq 0.
\end{dmath}
(iii) implies (i): This is the content of Theorem 7.10 in \cite{AM}.\\
\end{proof}

Recall the non-commutative set $G_{\delta} = (G_{\delta}(n))_n$ from \eqref{eq:gdeltan}. The following is the Toeplitz-Corona theorem of \cite{AM} for the non-commutative domain $ G_{\delta} = (G_{\delta}(n))$ with the assumption that $0 \in G_{\delta}(1)$. Observe that certain well-known non-commutative domains, for example, the non-commutative polydisc, can be realized as such $G_{\delta}$, for suitable $\delta$.

\begin{theorem}
Let $a_1, \dots, a_{\ell}$ be bounded $\C$-valued nc-functions defined on $G_{\delta}$ and $\mu >0$. If for all $n \in \N$ and $R \in G_{\delta}(n)$, $\sum_{i=1}^{\ell} a_i(R)a_i(R)^* \succeq \mu^2 I_n$, then there exists $\C$-valued nc functions $g_1, \dots, g_{\ell}$ defined on $G_{\delta}$ such that $\sum_{i=1}^{\ell} a_i(R)g_i(R) = I_n$ for each $n \in \N$ and $R \in G_{\delta}(n)$. Moreover the $B(\C,\C^{\ell})$ valued nc function $g$ satisfies  satisfies $\|g(R)\| \le \frac{1}{\mu}$ for all $n \in \N$ and $R \in G_{\delta}(n)$, where $g(R) = e_1 \otimes g_1(R) + \dots e_{\ell} \otimes g_j(R)$ and $e_1, e_2, \dots, e_{\ell}$ are the standard unit (column) vectors in $\C^{\ell}$. 
\end{theorem}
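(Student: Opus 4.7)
My plan is to derive this Toeplitz-Corona statement as a direct corollary of the implication (iii)$\Rightarrow$(ii) of Theorem~\ref{thm:gtcforam}. Take $\cE_1=\cE_3=\C$ and $\cE_2=\C^\ell$, all finite dimensional as required by that theorem, and assemble the scalar nc functions $a_1,\dots,a_\ell$ into a single row-valued nc function
\[
a(X):=\sum_{i=1}^{\ell} e_i^*\otimes a_i(X),\qquad X\in G_\delta(n),
\]
viewed as an element of $B(\cE_2\otimes\C^n,\cE_3\otimes\C^n)$. Let $b$ be the constant $B(\cE_1,\cE_3)$-valued nc function $b(X)=\mu I_n$. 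Because the assembly operation acts only on the coefficient $\cE$-slot, the nc axioms for $a$ are inherited from those of the $a_i$; boundedness is immediate, and $a(X)a(X)^*=\sum_i a_i(X)a_i(X)^*$ while $b(X)b(X)^*=\mu^2 I_n$. The hypothesis $\sum_i a_i(R)a_i(R)^*\succeq\mu^2 I_n$ is therefore precisely condition (iii) of Theorem~\ref{thm:gtcforam} applied to this $a$ and $b$.

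Invoking (iii)$\Rightarrow$(ii) yields a bounded $B(\C,\C^\ell)$-valued nc function $\tilde f$ on $G_\delta$ with $\|\tilde f(X)\|\le 1$ and $a(X)\tilde f(X)=\mu I_n$ for every $n$ and every $X\in G_\delta(n)$. I would then decompose $\tilde f$ coordinatewise as $\tilde f(X)=\sum_{i=1}^{\ell} e_i\otimes \tilde f_i(X)$ with $\tilde f_i(X)\in B(\C^n)$. Each $\tilde f_i$ is obtained from $\tilde f$ by composition with the bounded slice $e_i^*\otimes\mathrm{id}$, which respects the graded, direct-sum, and similarity axioms, so each $\tilde f_i$ is itself a $\C$-valued nc function on $G_\delta$. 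Setting $g_i:=\tilde f_i/\mu$, the identity $a(R)\tilde f(R)=\mu I_n$ unpacks as $\sum_{i=1}^{\ell} a_i(R)g_i(R)=I_n$, and the stacked function $g(R)=\sum_i e_i\otimes g_i(R)=\tilde f(R)/\mu$ satisfies $\|g(R)\|\le 1/\mu$, as required.

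The substantive analytic work has already been carried out in Proposition~\ref{prop:gtc} and Theorem~\ref{thm:gtcforam}; the only point demanding care is the verification that bundling the $a_i$ into a single operator-valued nc function, and unbundling $\tilde f$ into scalar components, preserves the nc structure. This is routine because the slice and injection maps used for (un)bundling act only on the coefficient Hilbert space $\cE$ and therefore commute with the $M_n(\C)$-slot on which the direct-sum and similarity axioms operate. Consequently no fresh analysis of $G_\delta$ or reworking of the Haar-averaging argument is required beyond what the cited results supply.
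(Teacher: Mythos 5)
Your proposal matches the paper's own argument: both assemble $a(R)=\sum_i e_i^*\otimes a_i(R)$, set $b(R)=\mu I_n$, observe the hypothesis is exactly condition (iii) of Theorem~\ref{thm:gtcforam}, invoke (iii)$\Rightarrow$(ii) to obtain the contractive nc function $f$ with $a(R)f(R)=\mu I_n$, decompose $f$ into scalar components, and rescale by $1/\mu$. This is correct and essentially identical to the paper's proof.
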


\begin{proof}
Letting $\cE_1 = \cE_3 = \C$ and $\cE_2 = \C^{\ell}$, $a(R) = e_1^* \otimes a_1(R) + \dots + e_{\ell}^* \otimes a_{\ell}(R)$ and $b(R) = \mu I_n$ for $R \in G_{\delta}(n)$ in Theorem \ref{thm:gtcforam}, the hypothesis becomes $a(R)a(R)^* - b(R)b(R)^* \succeq 0$. Theorem 
\ref{thm:gtcforam} now implies that there exists a $B(\C,\C^{\ell})$ valued nc function $f$ such that 
$\|f(R)\| \le 1$ and 
\begin{equation}
\label{eq:af=b}
[e_1^* \otimes a_1(R) + \dots e_{\ell}^* \otimes a_j(R)]f(R) = \mu I_n.
\end{equation}
Choose $\C$-valued nc functions $f_1, \dots, f_{\ell}$ such that $f(R) = e_1 \otimes f_1(R) + \dots e_{\ell} \otimes f_{\ell}(R)$. Using this in equation \eqref{eq:af=b} yields,
\[
 \sum_{i=1}^{\ell} a_i(R)f_i(R) = \mu I_n.
\]
Taking $g_i = \frac{1}{\mu} f_i$; $i = 1, 2, \dots, {\ell}$, completes the proof. 
\end{proof}

%
\end{section}

\section{Free Spectrahedra}
  Let $\Lambda$ denote a linear $r\times r$ matrix-valued nc polynomial,
\[
  \Lambda(x) = \sum_{j=1}^g A_j x_j,
\]
 where the $A_j$ are $r\times r$ matrices.   The corresponding {\it linear pencil} is the expression
\[
  L(x)= I-\Lambda(x)-\Lambda^*(x),
\]
 where $\Lambda^*$ is the formal adjoint of $\Lambda$ determined by,
\[
 \Lambda^*(X) =\Lambda(X)^*
\]
 for tuples $X=(X_1,\dots,X_d)$ of $n\times n$ matrices.   In this case the graded set
  $\cK = (\cK(n))_n$ is known as a {\it free (non-commutative) spectrahedron} (See \cite{HKM1}).  A bit of algebra shows
\[
   L(x)=(I-\Lambda)(x) \, (I-\Lambda)(x)^* -\Lambda(x)\Lambda(x)^*.
\]
\newline
\textbf{Acknowledgement:} I would like to thank Prof. Scott McCullough for several discussions and many helpful suggestions.

\end{document}